\newtheorem{theorem}{Theorem}[section]
\newtheorem*{theorem*}{Theorem}
\newtheorem{lemma}{Lemma}[section]
\theoremstyle{remark}
\newtheorem{remark}{Remark}[section]
\newcommand{\RR}{\mathds{R}}
\newcommand{\EE}{\mathbb{E}}
\DeclareMathOperator{\re}{Re}
\DeclareMathOperator{\im}{Im}
\begin{document}
\title{On the expected number of roots of a random Dirichlet polynomial}
\author{Marco Aymone and Caio Bueno}
\begin{abstract}
Let $T>0$ and consider the random Dirichlet polynomial $S_T(t)=\re\sum_{n\leq T} X_n n^{-1/2-it}$, where $(X_n)_{n}$ are i.i.d. Gaussian random variables with mean $0$ and variance $1$. We prove that the expected number of roots of $S_T(t)$ in the dyadic interval $[T,2T]$, say $\EE N(T)$, is approximately $2/\sqrt{3}$ times the number of zeros of the Riemann $\zeta$ function in the critical strip up to height $T$. Moreover, we also compute the expected number of zeros in the same dyadic interval of the $k$-th derivative of $S_T(t)$. Our proof requires the best upper bounds for the Riemann $\zeta$ function known up to date, and also estimates for the $L^2$ averages of certain Dirichlet polynomials.
\end{abstract}

\maketitle

\section{Introduction.}
\subsection{Main result and background} A well know fact (for example, \cite{montgomery_livro}, pg. 454) is that the number of zeros of the Riemann $\zeta$ function in the critical strip $0<\re s <1$ up to (imaginary) height $T$, say $N_\zeta(T)$, is such that
$$N_\zeta(T)=\frac{T}{2\pi}\log\left(\frac{T}{2\pi}\right)-\frac{T}{2\pi}+O(\log(T)),$$
where here we employ the standard notation $O$, $o$, $\ll$ and recall them at Section \ref{secao notacao}.

The Riemann hypothesis is the statement that all zeros of $\zeta$ in the critical strip $0<\re s<1$ are aligned in the vertical line $\re s=1/2$. Since the statement of this hypothesis in 1859, this has motivated a deep study of the Riemann $\zeta$ function on the critical line: $t\mapsto\zeta(1/2+it)$.

One way to approximate $\zeta(1/2+it)$ for $t$ in a dyadic interval $[T,2T]$ is by means of the approximate formula
\begin{equation}\label{equacao approximate zeta}
\zeta(1/2+it)=\sum_{n\leq T}\frac{1}{n^{1/2+it}}+O\left(\frac{1}{\sqrt{T}}\right),
\end{equation}
where the estimate above is uniform in $t\in[T,6.28 T]$ for all sufficiently large $T>0$, see \cite{tenenbaumlivro} pg. 236.

Therefore, it is natural to investigate the zeros of the Dirichlet polynomial 
$$\sum_{n\leq T}\frac{1}{n^{1/2+it}},$$
and motivated by this, here we introduce a random version of it
$$\sum_{n\leq T}\frac{X_n}{n^{1/2+it}},$$
where $(X_n)_n$ are i.i.d. Gaussian random variables with mean $0$ and variance $1$. 

A similar approach \cite{aymone_JLMS} involving partial sums of random multiplicative functions was considered by Heap, Zhao and the first author in order to model the maximal value of $\zeta$ on the critical line in the interval $[T,2T]$. Moreover, the probabilistic approach to the Riemann $\zeta$ function on the critical line has an extensive literature. Here we refer to this work \cite{bourgade_RMT} of Bourgade and Yor (and the references therein) for the many similarities found between $\zeta$ and Random Matrix Theory.

In Probability theory, the study of real roots of random functions is classical and dates back to 1938 since the seminal work of Littlewood and Offord on random polynomials with certain distributions \cite{Littlewood_random_pol_1}, \cite{Littlewood_random_pol_3}, \cite{littlewood_random_int_1}, \cite{littlewood_random_int_2}, by Kac in the Gaussian case \cite{kac_random_poly}, by Erd\H{o}s and Offord in the Rademacher case \cite{erdos_random_poly}, and by Ibragimov and Maslova for more general distributions \cite{ibragimov_random_poly_1}, \cite{ibragimov_random_poly_2}. Here we refer to the following papers for the state of the art in this field: \cite{vu_repulsion} and \cite{oanh_local_universality}, by Do, Nguyen and Vu and by Nguyen and Vu, respectively.

Here we are interested in the number of zeros of the real part of our random Dirichlet polynomial (see Section \ref{secao remarks} for a discussion on this model):
\begin{equation}\label{equacao definicao S_T(t)}
S_T(t)=\re \sum_{n\leq T}\frac{X_n}{n^{1/2+it}}=\sum_{n\leq T}\frac{X_n\cos(t\log n)}{\sqrt{n}}.
\end{equation}

We denote the number of zeros of $S_T(t)$ in the interval $[T,2T]$ by
\begin{equation}\label{equacao definicao N(T)}
N(T):=|\{t\in[T,2T]: S_T(t)=0\}|.
\end{equation}

Let $N_{0,\re \zeta}(T)$ be the number of zeros of $\re\zeta(1/2+it)$ in the interval $[T,2T]$. We can deduce from a result by Garaev (see the Remark at pg. 250 of \cite{garaev_real_part}) that 
$$N_{0,\re \zeta}(T)=\frac{T}{2\pi}\log(T)+N_{0,\zeta}(T)+O(T),$$
where $N_{0,\zeta}(T)$ is the number of zeros of $\zeta(1/2+it)$ in the interval $[T,2T]$.

Therefore, if the Riemann hypothesis is true, then 
\begin{equation}\label{equacao Nzeta real RH}
N_{0,\re \zeta}(T)=(2+o(1))( N_{\zeta}(2T)-N_{\zeta}(T)).
\end{equation} 
For our random model we prove that the proportion $\EE N(T)/ (N_{\zeta}(2T)-N_{\zeta}(T))$ is smaller than $2+o(1)$.
\begin{theorem}\label{teorema esperanca} Let $N(T)$ be as defined in \eqref{equacao definicao N(T)}. Then
$$\EE N(T)=\frac{1}{\pi\sqrt{3}} T \log T -\frac{\gamma}{2\pi\sqrt{3}}T+O\left(\frac{T}{\log T}\right),$$
where $\gamma$ is the Euler-Mascheroni constant.
\end{theorem}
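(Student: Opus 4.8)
The plan is to evaluate $\EE N(T)$ by the Kac--Rice (Rice) formula for the expected number of zeros of a smooth centered Gaussian process. Since $S_T(t)$ is Gaussian in $t$ with
$$
\VV(S_T(t))=\sum_{n\le T}\frac{\cos^2(t\log n)}{n},\qquad
\VV(S_T'(t))=\sum_{n\le T}\frac{(\log n)^2\sin^2(t\log n)}{n},
$$
and covariance $\rho(t)=\mathrm{Cov}(S_T(t),S_T'(t))=-\sum_{n\le T}\frac{\log n\,\cos(t\log n)\sin(t\log n)}{n}$, writing $\sigma^2(t),\tau^2(t)$ for the two variances, the Kac--Rice identity reads
$$
\EE N(T)=\frac{1}{\pi}\int_T^{2T}\frac{\sqrt{\sigma^2(t)\tau^2(t)-\rho(t)^2}}{\sigma^2(t)}\,dt .
$$
First I would record this formula, checking the non-degeneracy it requires (which holds because the $n=1$ term forces $\sigma^2(t)\ge 1$). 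This reduces the theorem to estimating an explicit, though nonlinear, integral of Dirichlet sums in $n^{-2it}$.

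Next I would extract the main term by isolating the non-oscillatory (diagonal) parts of the three sums via $\cos^2\theta=\tfrac12(1+\cos2\theta)$, $\sin^2\theta=\tfrac12(1-\cos2\theta)$ and $\cos\theta\sin\theta=\tfrac12\sin2\theta$. This gives $\sigma^2=\tfrac12\sum_{n\le T}n^{-1}+O_{\mathrm{osc}}$, $\tau^2=\tfrac12\sum_{n\le T}(\log n)^2n^{-1}+O_{\mathrm{osc}}$ and $\rho=O_{\mathrm{osc}}$, where each $O_{\mathrm{osc}}$ is a purely oscillatory Dirichlet polynomial. Replacing the integrand by its diagonal value $\tfrac{1}{\pi}\big(\sum_{n\le T}(\log n)^2n^{-1}\big)^{1/2}\big(\sum_{n\le T}n^{-1}\big)^{-1/2}$ and using $\sum_{n\le T}n^{-1}=\log T+\gamma+O(1/T)$ together with $\sum_{n\le T}(\log n)^2n^{-1}=\tfrac13(\log T)^3+O(1)$, I would Taylor-expand the ratio:
\begin{align*}
\frac{1}{\pi}\sqrt{\frac{\tfrac13(\log T)^3}{\log T+\gamma}}
&=\frac{\log T}{\pi\sqrt3}\Big(1-\frac{\gamma}{2\log T}+O\big((\log T)^{-2}\big)\Big)\\
&=\frac{\log T}{\pi\sqrt3}-\frac{\gamma}{2\pi\sqrt3}+O\Big(\frac{1}{\log T}\Big).
\end{align*}
As this is constant in $t$, integrating over $[T,2T]$ yields exactly $\frac{T\log T}{\pi\sqrt3}-\frac{\gamma T}{2\pi\sqrt3}$, the two claimed main terms, within the permitted $O(T/\log T)$.

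The remaining and hardest task is to show that replacing the true integrand by its diagonal value costs only $O(T/\log T)$. Since the integrand is a nonlinear function of $(\sigma^2,\tau^2,\rho)$, I would expand it to second order about the diagonal values and integrate term by term. The linear-in-oscillation terms carry a $t$-independent coefficient (the derivative evaluated at the diagonal point), so they integrate to $O(\log\log T)$ after one integration by parts, the frequencies $\log n$ being bounded below; the typical quadratic terms contribute $O(T/\log T)$, because the oscillatory polynomials have $L^2$-average $O(1)$ over $[T,2T]$ by the Montgomery--Vaughan mean value theorem, weighted against a second derivative of size $O(1/\log T)$, and the $\rho^2$ contribution is smaller still. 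The genuine obstacle is the \textbf{bad set} of $t$ on which the oscillatory part of $\sigma^2(t)$ is comparable to the main term $\tfrac12\log T$, so that $\sigma^2(t)$ is atypically small and the integrand is large; there the Taylor expansion fails. Controlling this set is delicate precisely because the relevant Dirichlet polynomials have length $T$ and are averaged over an interval of length $T$, the critical regime where the second-moment method is at its limit. I expect to handle it by combining the pointwise control coming from the best known bounds for $\zeta(1/2+it)$ (and $\zeta(1+2it)$) with higher-moment, $L^{2k}$, mean value estimates for the oscillatory polynomials, so that the bad set has measure $\ll T(\log T)^{-2k}$ while the integrand is at most $O\big((\log T)^{3/2}\big)$ there; taking $k\ge 2$ then pushes the bad-set contribution below $O(T/\log T)$ and completes the proof.
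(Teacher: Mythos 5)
Your outline follows the same backbone as the paper's proof: the Kac--Rice identity you write down is exactly the Edelman--Kostlan formula \eqref{equacao formula zeros edelman and kostlan} the authors use, the diagonal/oscillatory split via the double-angle identities is identical to Lemma \ref{lemma formula zeros nosso caso}, the Taylor expansion of $\sqrt{\tfrac13(\log T)^3/(\log T+\gamma)}$ produces the two main terms exactly as in the paper, and the error analysis (linear oscillatory terms by integration by parts, quadratic terms by the mean value theorem \eqref{equacao lemma ivic}, with the borderline contribution $\log T\int_T^{2T}(u_T(2t)/\log T)^2dt\ll T/\log T$) matches their Steps 1--9. The one genuine divergence is your treatment of the ``bad set.'' The paper shows there is \emph{no} bad set: by Lemma \ref{lemma daodao} the oscillatory part of $2\sigma^2(t)$ equals $\re\zeta(1+2it)+O(T^{-1+\epsilon})$ on $[T,2T]$, and the Vinogradov--Korobov/Richert bound \eqref{equacao estimativa zeta} gives $|\zeta(1+2it)|\ll(\log T)^{2/3}$, so the oscillation is pointwise $o(\log T)$ and the expansion is valid for every $t$ (the derivatives $u_T'$, $u_T''$ are likewise controlled pointwise through Lemma \ref{lemma derivadas da zeta}). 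Your alternative --- Chebyshev with $L^{2k}$ moments for the measure of the bad set plus the crude bound $\tau/\sigma\ll(\log T)^{3/2}$ there --- does also work: applying \eqref{equacao lemma ivic} to $u_T^k$, a polynomial of length $T^k$ with coefficients $\ll d_k(m)/m$, gives $\int_T^{2T}|u_T(2t)|^{2k}dt\ll_k T$ because the off-diagonal error $\sum_m m|c_m|^2\ll(\log T)^{O_k(1)}$ is tiny; hence the bad set has measure $\ll_k T(\log T)^{-2k}$ and contributes $\ll T(\log T)^{3/2-2k}$. This is a genuinely more elementary route: it dispenses with the Vinogradov--Korobov input that the paper advertises as essential, at the cost of running the moment computation and tracking the bad set through the linear and quadratic terms as well.

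There is, however, one concrete error in your reasoning for the linear terms: you assert that the frequencies $\log n$ are bounded below, but $\log 1=0$. The $n=1$ summand contributes the constant $1$ to $\sigma^2(t)$, and after the double-angle split half of it, namely $\tfrac12\cos(2t\log 1)=\tfrac12$, sits inside what you call the ``purely oscillatory'' part of $\sigma^2$. Its integral over $[T,2T]$ is $T/2$, not $O(\log\log T)$, and since the linear term carries the coefficient $-\tfrac{1}{2\pi\sqrt3}$ after normalization, this feeds a contribution of order $T$ directly into the coefficient of the second main term. You must either keep the full $n=1$ contribution inside the diagonal from the outset (so that the oscillatory part runs over $n\ge 2$ only, where $\log n\ge\log 2$) and recompute the resulting constant, or account for it explicitly in Step 1; as written, the claimed $O(\log\log T)$ bound for the linear oscillatory integral is false.
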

In particular, we have that
$$\EE N(T)=\left(\frac{2}{\sqrt{3}}+o(1)\right)( N_{\zeta}(2T)-N_{\zeta}(T)),$$
and this is interesting to compare with \eqref{equacao Nzeta real RH} under the Riemann hypothesis.

\begin{remark} We stress that the same result of Theorem \ref{teorema esperanca} (with same constants) holds if we replace the real part of $\sum_{n\leq T}X_nn^{-1/2-it}$ by its imaginary part. Moreover, a variant of Theorem \ref{teorema esperanca} should hold for other distributions, that is, the hypothesis that $(X_n)_n$ are standard Gaussian random variables should not be too restrictive if one takes into account the universality results of Nguyen and Vu \cite{oanh_local_universality}. Therefore, one should obtain the same leading term $\EE N(T)=\frac{1+o(1)}{\pi\sqrt{3}} T \log T$ if the Gaussians are replaced by independent Rademacher $\pm 1$ random variables, for example. However, the change of distribution of $(X_n)_n$ might affect the lower order terms of $\EE N(T)$.
\end{remark}

\subsection{Derivatives of $S_T(t)$} For a positive integer $k$, a result due to Berndt \cite{berndt_zeros_derivative} states that the number of zeros of the $k$-th derivative of the Riemann $\zeta$ function, $\zeta^{(k)}(\sigma+it)$, with $0<t<T$ is given by
$$\frac{1}{2\pi}T\log T-\left(\frac{1+\log 4\pi}{2\pi}\right)T+O(\log T).$$

See also the paper \cite{irma_zeros_derivative} by Ge and Suriajaya for conditional estimates for this number of zeros.

The approximation \eqref{equacao approximate zeta} is still valid for the derivatives of the Riemann $\zeta$ function, but in this case we have uniformly for all $t\in[T,6.28T]$ (see Lemma 4 of \cite{daodao_BLMS})
$$\zeta^{(k)}(1/2+it)=(-1)^k\sum_{n\leq T}\frac{(\log n)^k}{n^{1/2+it}}+O_{k,\epsilon}\left(\frac{1}{T^{1/2-\epsilon}}\right),$$
where $\epsilon>0$ and the notation $O_{k,\epsilon}$ means that the implied constant depends only on $k$ and $\epsilon$.

Motivated by this, we consider 
\begin{equation}\label{equacao N zeros derivada}
N^{(k)}(T):=|\{t\in[T,2T]:S_T^{(k)}(t)=0\}|,
\end{equation}
where 
$$S_T^{(k)}(t)=\frac{d^k}{dt^{k}}S_T(t)=\re\,(-i)^{k}\sum_{n\leq T}\frac{X_n(\log n)^k}{n^{1/2+it}}.$$

Our next result generalizes Theorem \ref{teorema esperanca}.
\begin{theorem}\label{teorema zeros derivada} Let $N^{(k)}(T)$ be as in \eqref{equacao N zeros derivada}. Then
$$ \EE N^{(k)}(T)=\frac{1}{\pi}\sqrt{\frac{2k+1}{2k+3}} T \log T -\frac{\gamma_{2k}}{2\pi}\sqrt{\frac{(2k+1)^3}{2k+3}}\frac{T}{(\log T)^{2k}}+O\left(\frac{T}{(\log T)^{2k+1}}\right),$$
where $\gamma_k$ is the $k$-th Stieltjes constant.
\end{theorem}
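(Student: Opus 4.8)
The plan is to compute the expected number of real zeros of the stationary Gaussian process $S_T^{(k)}(t)$ on $[T,2T]$ via the Kac–Rice formula. First I would observe that $S_T^{(k)}(t)=\sum_{n\leq T} X_n a_n^{(k)}\cos(t\log n+\theta_k)$ is, for each fixed $t$, a centered Gaussian random variable, so the process $(S_T^{(k)}(t))_t$ is a centered Gaussian process with a smooth (indeed analytic, being a finite trigonometric sum) covariance. The Kac–Rice formula then gives
\begin{equation}\label{eq:kacrice}
\EE N^{(k)}(T)=\frac{1}{\pi}\int_T^{2T}\sqrt{\frac{\VV\big(S_T^{(k+1)}(t)\big)}{\VV\big(S_T^{(k)}(t)\big)}-\frac{\big(\text{Cov}(S_T^{(k)}(t),S_T^{(k+1)}(t))\big)^2}{\big(\VV(S_T^{(k)}(t))\big)^2}}\,dt.
\end{equation}
The main point is therefore to reduce the problem to computing three second-moment quantities: $A_k(t):=\VV(S_T^{(k)}(t))$, the cross term $B_k(t):=\text{Cov}(S_T^{(k)}(t),S_T^{(k+1)}(t))$, and $A_{k+1}(t):=\VV(S_T^{(k+1)}(t))$. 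Since the $X_n$ are i.i.d. standard Gaussians, each of these is an explicit Dirichlet polynomial in disguise: $A_k(t)=\sum_{n\leq T}\frac{(\log n)^{2k}}{n}\cos^2(t\log n)$ and similar expressions with extra powers of $\log n$ and mixed $\cos\sin$ terms.

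Next I would evaluate these second moments asymptotically by splitting each $\cos^2$, $\sin^2$, or $\cos\sin$ into a constant (mean) part plus an oscillating part. The mean parts produce the dominant terms $\sum_{n\leq T}\frac{(\log n)^{2k}}{n}$, which by partial summation against $N_\zeta$-type estimates (or directly) behave like $\frac{(\log T)^{2k+1}}{2k+1}+\gamma_{2k}+\cdots$, where the Stieltjes constants $\gamma_{2k}$ enter exactly as the constant terms in the Laurent-type expansion of $\sum_{n\leq T}(\log n)^{2k}/n$. The oscillating parts are Dirichlet polynomials of the form $\sum_{n}\frac{(\log n)^{j}}{n}e^{\pm 2it\log n}$, and here is where the hypotheses of the paper are used: I expect to control their contribution to the integral over $[T,2T]$ by an $L^2$-mean-value estimate for such Dirichlet polynomials (the "estimates for the $L^2$ averages" advertised in the abstract), showing they are negligible relative to the main term. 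Combining, $A_k(t)\sim\frac12\cdot\frac{(\log T)^{2k+1}}{2k+1}$ up to lower-order terms carrying the Stieltjes constants, and analogously $A_{k+1}(t)\sim\frac12\cdot\frac{(\log T)^{2k+3}}{2k+3}$, while the cross term $B_k(t)$ is genuinely lower order (its mean part vanishes by the orthogonality of $\cos$ and $\sin$), so it contributes only to the error.

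Finally I would substitute these asymptotics into \eqref{eq:kacrice}. The ratio under the square root is $\frac{A_{k+1}(t)}{A_k(t)}\big(1+o(1)\big)\approx\frac{(\log T)^{2k+3}/(2k+3)}{(\log T)^{2k+1}/(2k+1)}=(\log T)^2\frac{2k+1}{2k+3}$, and taking the square root gives the leading factor $\sqrt{\frac{2k+1}{2k+3}}\log T$; integrating the constant over the length-$T$ interval $[T,2T]$ yields the main term $\frac{1}{\pi}\sqrt{\frac{2k+1}{2k+3}}T\log T$. Tracking the subleading Stieltjes-constant corrections through the square root (via a first-order Taylor expansion of $\sqrt{\cdot}$) produces the secondary term with $\gamma_{2k}$ and the prefactor $\sqrt{(2k+1)^3/(2k+3)}$. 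The main obstacle I anticipate is twofold: first, justifying that the oscillatory Dirichlet-polynomial contributions to $A_k$, $A_{k+1}$ and especially the cross term $B_k$ are uniformly small on $[T,2T]$ with a good enough error (this is where the best known bounds for $\zeta$ and the $L^2$ averages are essential, since a pointwise bound is too weak and one needs integrated control); and second, propagating the error through the nonlinear square root in the Kac–Rice integrand with enough precision to extract the correct second-order term rather than merely the leading asymptotic — this requires the variance asymptotics to hold with an error that is $o\big((\log T)^{2k+1}/(\log T)^{2k}\big)$, i.e. genuinely sharper than the main term by more than the two powers of $\log T$ separating the first and second terms of the final formula.
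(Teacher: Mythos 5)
Your plan is essentially the paper's own proof: the Kac--Rice formula you write down is exactly the Edelman--Kostlan formula used in the paper, and the subsequent steps (splitting $\cos^2$ and $\cos\sin$ into mean plus oscillating parts, extracting Stieltjes constants from $\sum_{n\le T}(\log n)^{2k}/n$, controlling the oscillating Dirichlet polynomials pointwise via Vinogradov--Korobov-type bounds for $\zeta$ and in mean via the $L^2$ mean value theorem, then Taylor-expanding the square root to capture the $\gamma_{2k}$ secondary term) all match the paper's argument. The only quibble is that the process is not stationary, but the general Gaussian Kac--Rice formula you actually invoke does not require stationarity, so this is harmless.
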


\subsection{Structure of the paper} We conclude this introduction by briefly speaking about the next sections. We begin at Section \ref{secao notacao} with the main asymptotic notation used in this paper. Then we proceed in Section \ref{secao preliminares} with the main tools used in the proof of the main results, this include tools from Probability, Analytic Number Theory and Analysis. We prove Theorem \ref{teorema esperanca} at Section \ref{secao prova teorema 1}. In Section \ref{secao prova teorema 2} we only indicate the main modifications in the proof of Theorem \ref{teorema esperanca} to obtain Theorem \ref{teorema zeros derivada}. We prefer to do in this way to avoid overloaded notation, so that the ideas become more clear. We conclude this paper with some remarks at Section \ref{secao remarks}.

\section{Notation}\label{secao notacao}

We use the standard notation: 
\begin{enumerate}
	\item $f(x)\ll g(x)$ or equivalently $f(x)=O(g(x))$;
	\item $f(x)=o(g(x))$;
	\item $f(x)\sim g(x)$.
\end{enumerate}
The case (1) is used whenever there exists a constant $C>0$ such that $|f(x)|\leq C |g(x)|$, for all $x$ in a set of numbers. This set of numbers when not specified is the real interval $[L,\infty]$, for some $L>0$, but also there are instances where this set can accumulate at the right or at the left of a given real number, or at complex number. Sometimes we also employ the notation $\ll_\epsilon$ or $O_\epsilon$ to indicate that the implied constant may depends on $\epsilon$. 

In case (2), we mean that $\lim_{x}f(x)/g(x)=0$. When not specified, this limit is as $x\to \infty$ but also can be as $x$ approaches any complex number in a specific direction. 

In case (3), we mean that $f(x)=(1+o(1))g(x)$.

\section{Preliminaries}\label{secao preliminares}

\subsection{The main tool from Probability Theory} The proof of our results starts with the celebrated formula of Edelman and Kostlan \cite{edelman_roots_random_poly}: If 
$$v(t)=(v_1(t),v_2(t),...,v_N(t))$$
is a vector of twice differentiable functions $v_j:\RR\to\RR$, $1\leq j\leq N$, and if $X_1,...,X_N$ are i.i.d. Gaussian random variables with mean $0$ and variance $1$, 
then, for any interval $I\subset \RR$, the expected number of solutions $t\in I$ of the equation
$$X_1v_1(t)+...+X_Nv_N(t)=0$$
equals to
\begin{equation}\label{equacao formula zeros edelman and kostlan}
\frac{1}{\pi}\int_I\sqrt{\frac{\partial^2}{\partial x\partial y}\log v(x)\cdot v(y)\bigg{|}_{x=y=t}}dt.
\end{equation}

\subsection{Estimates for the Riemann $\zeta$ function and its derivatives} Obtaining estimates for the Riemann $\zeta$ function on the critical strip is a classical and an important problem. In our proofs we will use the bound
$$\zeta(\sigma+it)\ll (1+|t|^{100(1-\sigma)^{3/2}})(\log|t|)^{2/3}$$
that holds uniformly in the region $\sigma\geq 0$ and $|t|\geq 2$, see \cite{titchmarsh-theory-rzf} pg. 135. This bound is obtained upon the famous method of Vinogradov--Korobov on exponential sums and is due to Richert \cite{richert_zeta_bound}.

Therefore, by the estimate above, if $1-\sigma\leq \frac{1}{100(\log |t|)^{2/3}}$, then
\begin{equation}\label{equacao estimativa zeta}
\zeta(\sigma+it)\ll (\log|t|)^{2/3}.
\end{equation}

This bound allow us to prove the following result regarding the derivatives of $\zeta$.

\begin{lemma}\label{lemma derivadas da zeta}
    Let $k\geq 1$ be a fixed integer and $\zeta^{(k)}$ denote the $k$-th derivative of the Riemann zeta function. Then
        \begin{align*}
            |\zeta^{(k)}(1+it)|\ll_k (\log t)^{\frac{2}{3}(k+1)},
        \end{align*}
    for $t\geq 2$.
\end{lemma}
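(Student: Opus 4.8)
The plan is to bound $\zeta^{(k)}(1+it)$ by recovering the derivative from $\zeta$ itself via Cauchy's integral formula on a small circle, exploiting the uniform bound \eqref{equacao estimativa zeta} which holds not just on the line $\sigma=1$ but throughout the thin region $1-\sigma\leq \frac{1}{100(\log|t|)^{2/3}}$. The point is that $\zeta(s)$ is analytic in a neighbourhood of $1+it$ (for $t\geq 2$ it is away from the pole at $s=1$), so I can write
\begin{equation*}
\zeta^{(k)}(1+it)=\frac{k!}{2\pi i}\oint_{|s-(1+it)|=r}\frac{\zeta(s)}{(s-(1+it))^{k+1}}\,ds,
\end{equation*}
and estimate the integral trivially by $\frac{k!}{r^{k}}\max_{|s-(1+it)|=r}|\zeta(s)|$.

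The key step is choosing the radius $r$. To keep the whole circle inside the region where \eqref{equacao estimativa zeta} applies, I would take $r$ comparable to $\frac{1}{100(\log t)^{2/3}}$; more precisely $r=\frac{c}{(\log t)^{2/3}}$ for a suitable small constant $c$. For every $s$ on this circle we then have $\re s\geq 1-r$, so the hypothesis $1-\re s\leq \frac{1}{100(\log|t|)^{2/3}}$ is met (for $t$ large, after absorbing the harmless shift of the imaginary part, which only changes $\log|t|$ by a bounded factor), and hence $|\zeta(s)|\ll (\log t)^{2/3}$ uniformly on the circle. Plugging this in gives
\begin{equation*}
|\zeta^{(k)}(1+it)|\ll_k \frac{1}{r^{k}}(\log t)^{2/3}\ll_k (\log t)^{2k/3}(\log t)^{2/3}=(\log t)^{\frac{2}{3}(k+1)},
\end{equation*}
which is exactly the claimed bound.

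I expect the main technical care to be twofold. First, one must verify that the circle of radius $r$ around $1+it$ genuinely lies in the region of validity of \eqref{equacao estimativa zeta}: the constraint there is phrased in terms of $|t|$, the imaginary part of the point being evaluated, which varies by $O(r)=o(1)$ along the circle, so $\log|t'|=(1+o(1))\log t$ and the admissible width $\frac{1}{100(\log|t'|)^{2/3}}$ is itself $(1+o(1))\frac{1}{100(\log t)^{2/3}}$; choosing $c$ small enough (say $c<1/100$) absorbs this fluctuation cleanly. Second, one should confirm the circle avoids the pole at $s=1$, which is immediate since $t\geq 2$ forces $|s-1|\geq |t|-r\geq 2-o(1)$. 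Everything else is the routine Cauchy estimate, so the only genuine obstacle is bookkeeping the dependence of the region's width on the varying imaginary part, which the small-constant choice of $r$ resolves.
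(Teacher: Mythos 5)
Your proposal is correct and follows essentially the same route as the paper: Cauchy's integral formula on a circle of radius comparable to $\tfrac{1}{100(\log t)^{2/3}}$ (the paper takes $\tfrac{\varepsilon(t)}{4}$ with $\varepsilon(t)=\tfrac{1}{100(\log t)^{2/3}}$), combined with the Richert--Vinogradov--Korobov bound \eqref{equacao estimativa zeta} on the circle. Your attention to keeping the circle inside the region of validity via a small constant matches the paper's own footnoted remark on the choice of radius.
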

\begin{proof}
    Our aim is to apply Cauchy's integral formula on a suitable curve and bound the $k$-th derivative $\zeta^{(k)}$ at the point $1+it$, by using \eqref{equacao estimativa zeta}.

Let $\varepsilon(t)=\frac{1}{100(\log t)^{2/3}}$ and $\mathcal{C}$ be a circle centered at $1+it$ and with radius\footnote{A small remark about the radius $\frac{\varepsilon(t)}{4}$ is that, since we want to use the estimate for the Riemann zeta function, we need to take the circle $\mathcal{C}$ entirely inside the region such that this estimate remains true. The curve $1-\frac{1}{100(\log t)^{2/3}}$ monotonically approaches the line $1+it$ as $t\to +\infty$, so we can't simply use $\varepsilon(t)$ as the radius. It turns out that only a small correction is needed and multiplying by the constant $\frac{1}{4}$ is enough to keep $\mathcal{C}$ strictly to the right of this curve.} $\frac{\varepsilon(t)}{4}$.

    By Cauchy's integral formula,
        \begin{align*}
            \zeta^{(k)}(1+it)=\frac{k!}{2\pi i}\int_{\mathcal{C}}\frac{\zeta(s)}{(s-(1+it))^{k+1}} \,ds.
        \end{align*}

    Therefore, estimating the integral on the right hand side, we have
        \begin{align*}
            \bigg|\int_{\mathcal{C}}\frac{\zeta(s)}{(s-(1+it))^{k+1}}\,ds\bigg| \ll_k (\log t)^{(k+1)\frac{2}{3}},
        \end{align*}
    where we used the fact that $|s-(1+it)|$ is exactly the radius of the circle and also the bound for the Riemann zeta function discussed earlier. Thus, our result follows.

\end{proof}

Another result we will need and already presented in the introduction is the following.

\begin{lemma}[Yang \cite{daodao_BLMS}, p. 8, Lemma 4, and \cite{tenenbaumlivro}, pg. 236]\label{lemma daodao}
    Let $\sigma_0\in(0,1)$ be fixed. If $T$ is sufficiently large, then uniformly for $\epsilon>0$, $t\in [T,6.28T]$, $\sigma \in [\sigma_0+\epsilon,\infty)$ and all integers $k\geq 0$, we have
        \begin{align*}
            (-1)^k\zeta^{(k)}(\sigma+it)=\sum_{n\leq T}\frac{(\log n)^k}{n^{\sigma+it}}+O_{\sigma_0}\bigg(\frac{k! T^{-\sigma+\epsilon}}{\epsilon^k}\bigg).
        \end{align*}
\end{lemma}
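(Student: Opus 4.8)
The plan is to reduce everything to the case $k=0$, which is exactly the approximate formula for $\zeta$ itself (the cited result of Tenenbaum, stated for a general abscissa $\sigma\ge\sigma_0+\epsilon$ rather than only $\sigma=1/2$), and then to recover all higher derivatives from it by Cauchy's integral formula, in the same spirit as Lemma \ref{lemma derivadas da zeta}. Concretely, set $s_0=\sigma+it$ with $t\in[T,6.28T]$ and define the holomorphic function $g(s)=\zeta(s)-\sum_{n\le T}n^{-s}$. Since $t\ge T$ is large, $s_0$ stays far from the pole of $\zeta$ at $s=1$, so $g$ is holomorphic on a full neighbourhood of $s_0$ contained in the half-plane $\re s>\sigma_0$; the truncated sum is entire. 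The starting point is the bound $|g(s)|\ll_{\sigma_0}T^{-\re s+o(1)}$, valid uniformly for $\re s\ge\sigma_0$ and the relevant range of imaginary part, which is precisely the $k=0$ statement.

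The second step is purely formal differentiation made rigorous by Cauchy. Because the finite sum can be differentiated term by term, $\frac{d^k}{ds^k}\sum_{n\le T}n^{-s}=(-1)^k\sum_{n\le T}(\log n)^k n^{-s}$ exactly, so the quantity we must estimate is nothing but $(-1)^k g^{(k)}(s_0)$. I would apply Cauchy's integral formula on the circle $\mathcal{C}=\{\,s:|s-s_0|=r\,\}$, with radius $r$ chosen comparable to $\epsilon$ so that $\mathcal{C}$ remains inside $\re s\ge\sigma_0$ (possible since $\sigma\ge\sigma_0+\epsilon$). This gives
\[
\Big|(-1)^k\zeta^{(k)}(s_0)-\sum_{n\le T}\frac{(\log n)^k}{n^{s_0}}\Big|=\Big|\frac{k!}{2\pi i}\int_{\mathcal{C}}\frac{g(s)}{(s-s_0)^{k+1}}\,ds\Big|\le\frac{k!}{r^{k}}\max_{s\in\mathcal{C}}|g(s)|.
\]
On $\mathcal{C}$ one has $\re s\ge\sigma-r$, so the base-case bound yields $\max_{\mathcal{C}}|g|\ll_{\sigma_0}T^{-\sigma+r+o(1)}$; taking $r\asymp\epsilon$ and absorbing the $T^{o(1)}$ produces the claimed error $O_{\sigma_0}\big(k!\,\epsilon^{-k}T^{-\sigma+\epsilon}\big)$. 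This is precisely the mechanism by which the factor $k!/\epsilon^{k}$ appears: the $k!$ comes from Cauchy's formula, and the $\epsilon^{-k}$ from $r^{-k}$ with $r\asymp\epsilon$.

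The main obstacle is the $k=0$ input itself, and in particular its uniformity across the whole range $\sigma\ge\sigma_0+\epsilon$ rather than at a single line. For $\sigma\in[\sigma_0,1]$ this is the approximate functional equation in which the conjugate sum, of length $t/(2\pi T)<1$, is empty (which is exactly why the cutoff $T$ together with the window $t\le 6.28T$ is the right choice), the error being governed by boundary terms of size $T^{-\sigma}$; for $\sigma>1$ one must still extract genuine cancellation from $\sum_{n>T}n^{-\sigma-it}$ rather than estimate trivially, since the trivial tail $T^{1-\sigma}$ is too large. A secondary, purely bookkeeping difficulty is to coordinate the radius $r$, the $T^{o(1)}$ loss in the base case, and the target exponent $T^{-\sigma+\epsilon}$ so that the implied constant depends only on $\sigma_0$ and not on $k$; this is what forces the radius to be tied to $\epsilon$ and explains the exact shape of the stated error term.
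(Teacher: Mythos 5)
The paper does not actually prove this lemma: it is imported verbatim from Yang and from Tenenbaum, so there is no in-paper argument to measure your proposal against. That said, your derivation is the standard one and is sound in outline. The real content is the $k=0$ input $\zeta(s)=\sum_{n\le T}n^{-s}+O_{\sigma_0}(T^{-\re s})$, uniformly for $\re s\ge\sigma_0$ and $|\Im s|\le 6.28T$ (Titchmarsh's Theorem 4.11 with $x=T$, after absorbing the term $x^{1-s}/(1-s)\ll T^{-\sigma}$ into the error, the inequality $6.28<2\pi$ being exactly what keeps $t$ inside the admissible range); you correctly identify that this requires genuine cancellation even for $\sigma>1$ and cannot be obtained by trivially bounding the tail. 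The passage to $k\ge 1$ via Cauchy's formula on a circle of radius $\epsilon$ about $\sigma+it$ then yields precisely the factor $k!\,\epsilon^{-k}$ and the loss $T^{\epsilon}$ from $\re s\ge\sigma-\epsilon$ on the contour, with an implied constant depending only on $\sigma_0$; this is the same device the paper itself uses in Lemma \ref{lemma derivadas da zeta}, so your route is entirely consistent with the authors' toolkit.

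Two points need tightening. First, state the base case as a clean $O_{\sigma_0}(T^{-\re s})$ rather than $T^{-\re s+o(1)}$: since the lemma is claimed uniformly in $\epsilon>0$, an unspecified $T^{o(1)}$ loss cannot be ``absorbed'' --- for $\epsilon$ smaller than that $o(1)$ there is no admissible radius, and shrinking the radius to $\epsilon/2$ instead introduces a factor $2^{k}$ that the $k$-independent constant in $O_{\sigma_0}$ cannot swallow. The classical theorem does give the clean bound, so this is a presentational fix, not a mathematical one. Second, for $t$ near $6.28T$ the contour contains points with imaginary part up to $6.28T+\epsilon$, which must still lie in the range where the base case holds; this is fine because $6.28<2\pi$ leaves room of order $T$, but it deserves a sentence, and one should note that $\epsilon$ may be assumed bounded (say $\epsilon<1$), since otherwise the claimed error term dwarfs the main term and the statement is vacuous (this also keeps the pole at $s=1$ outside the contour).
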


\subsection{The main tool from Fourier Analysis} In the course of our proof we will need the following estimate on $L^2$ averages of Dirichlet polynomials: If $a_1,...,a_N$ are complex numbers, then
\begin{equation}\label{equacao lemma ivic}
\int_{0}^T\left|\sum_{n\leq N}a_nn^{it}\right|^2dt=T\sum_{n\leq N}|a_n|^2+O\left(\sum_{n\leq N}n|a_n|^2\right).
\end{equation}

A proof of this result can be found at \cite{ivic_livro} pg. 130. A straightforward application of this result lead us to the following result.

\begin{lemma}\label{lemma L^2 average dirichlet polynomials} For any non-negative integer $k$, we have that
$$\int_{0}^{2T}\left|\sum_{n\leq T}\frac{(\log n)^k}{n}n^{it}\right|^2dt\ll_kT.$$
\end{lemma}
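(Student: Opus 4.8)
The plan is to apply the mean value estimate \eqref{equacao lemma ivic} directly, with the coefficients
$$a_n=\frac{(\log n)^k}{n},$$
taking the length of the Dirichlet polynomial to be $T$ and the length of the integration interval to be $2T$, since these two parameters play independent roles in \eqref{equacao lemma ivic}. This gives
$$\int_{0}^{2T}\left|\sum_{n\leq T}\frac{(\log n)^k}{n}n^{it}\right|^2dt = 2T\sum_{n\leq T}\frac{(\log n)^{2k}}{n^2}+O\left(\sum_{n\leq T}\frac{(\log n)^{2k}}{n}\right),$$
and it remains only to estimate the two sums on the right.

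For the main term I would observe that $(\log n)^{2k}=o(n^{1/2})$ as $n\to\infty$, so that $\frac{(\log n)^{2k}}{n^2}\ll_k n^{-3/2}$ and the series $\sum_{n\geq 1}\frac{(\log n)^{2k}}{n^2}$ converges to a finite constant $c_k$ depending only on $k$. Hence $2T\sum_{n\leq T}\frac{(\log n)^{2k}}{n^2}\leq 2c_k\,T\ll_k T$.

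For the error term, a comparison with the integral $\int_{1}^{T}\frac{(\log x)^{2k}}{x}\,dx=\frac{(\log T)^{2k+1}}{2k+1}$ (equivalently, partial summation) gives $\sum_{n\leq T}\frac{(\log n)^{2k}}{n}\ll_k(\log T)^{2k+1}$, which is $o(T)$ and is therefore absorbed into the bound $\ll_k T$. Combining the two estimates yields the claim. There is no genuine obstacle here; the only point requiring any care is the convergence of the main series, which is precisely what produces the constant implicit in $\ll_k$.
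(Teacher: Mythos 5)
Your proof is correct and is exactly the ``straightforward application'' of \eqref{equacao lemma ivic} that the paper intends (the paper gives no further details): the main term $2T\sum_{n\leq T}(\log n)^{2k}/n^2\ll_k T$ by convergence of the series, and the error term $\sum_{n\leq T}n|a_n|^2=\sum_{n\leq T}(\log n)^{2k}/n\ll_k(\log T)^{2k+1}$ is negligible. Your observation that the integration length and the polynomial length are independent parameters in \eqref{equacao lemma ivic} is also correct.
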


\subsection{Some quick estimates from Real analysis} We will need the following results.
\begin{lemma}\label{lemma serie de taylor} If $A$ and $B$ are real numbers such that $|B|<A$, then
\begin{align*}
\frac{1}{A+B}&=\frac{1}{A}-\frac{B}{A^2}+O\left(\frac{B^2}{A^3}\right)\\
\sqrt{A+B}&=\sqrt{A}+\frac{B}{2\sqrt{A}}+O\left(\frac{B^2}{A^{3/2}}\right).
\end{align*}
\end{lemma}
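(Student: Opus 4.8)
The plan is to reduce both identities to the standard one-variable Taylor expansions of $(1+u)^{-1}$ and $(1+u)^{1/2}$ about $u=0$, after factoring out the leading quantity $A$. Note first that the hypothesis $|B|<A$ forces $A>0$, so that $1/A$ and $\sqrt{A}$ make sense and the ratio $u:=B/A$ satisfies $|u|<1$. This single substitution is what turns the two-parameter statement into a routine remainder estimate for a fixed smooth function.

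For the first identity I would write
$$\frac{1}{A+B}=\frac{1}{A}\cdot\frac{1}{1+u},$$
and apply Taylor's theorem with Lagrange remainder to $g(u)=(1+u)^{-1}$: since $g(0)=1$, $g'(0)=-1$ and $g''(u)=2(1+u)^{-3}$, one gets $g(u)=1-u+(1+\xi)^{-3}u^2$ for some $\xi$ strictly between $0$ and $u$. Multiplying by $1/A$ and substituting $u=B/A$ produces exactly $\tfrac{1}{A}-\tfrac{B}{A^2}+O(B^2/A^3)$. The second identity is handled identically with $h(u)=(1+u)^{1/2}$, for which $h(0)=1$, $h'(0)=\tfrac12$ and $h''(u)=-\tfrac14(1+u)^{-3/2}$, giving $h(u)=1+\tfrac12 u-\tfrac18(1+\xi)^{-3/2}u^2$; after multiplying by $\sqrt{A}$ and substituting $u=B/A$ this is the claimed $\sqrt{A}+\tfrac{B}{2\sqrt{A}}+O(B^2/A^{3/2})$.

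The only delicate point, and what I would regard as the main (admittedly mild) obstacle, is the uniformity of the implied constants in the remainders. The Lagrange factors $(1+\xi)^{-3}$ and $(1+\xi)^{-3/2}$ stay bounded only when $\xi$ is bounded away from $-1$, which fails as $u\to -1^{+}$, i.e.\ as $B\to -A^{+}$; thus the bare hypothesis $|B|<A$ does not by itself yield an absolute constant. One should therefore read the lemma in the regime in which it is applied throughout the paper, where $|B|$ is small relative to $A$ (say $|B|\le A/2$). Under such a restriction one has $1+\xi\ge\tfrac12$, so both remainder factors are $O(1)$ and the error terms are genuinely $O(B^2/A^3)$ and $O(B^2/A^{3/2})$ with absolute implied constants, which completes the proof.
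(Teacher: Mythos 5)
Your proof is correct and takes essentially the same route as the paper, whose entire proof consists of citing the expansions $\frac{1}{1+x}=1-x+O(x^2)$ and $\sqrt{1+x}=1+x/2+O(x^2)$ for $|x|<1$ after the implicit substitution $x=B/A$. Your caveat about the implied constant blowing up as $B\to -A^{+}$ is a fair observation that applies equally to the paper's own statement and one-line proof; since in every application of the lemma one has $|B|=o(A)$, it is harmless, and your restriction $|B|\le A/2$ is a reasonable way to make the lemma literally true with absolute constants.
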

\begin{proof} The first estimate is a direct application of the estimate $\frac{1}{1+x}=1-x+O(x^2)$, $|x|<1$, and the second an application of $\sqrt{1+x}=1+x/2+O(x^2)$, with $|x|<1$.
\end{proof}
\begin{lemma}\label{lema stieltjes constants} For any $T\geq 2$ and an integer $k\geq 0$, 
$$\sum_{n\leq T}\frac{(\log n)^k}{n}=\frac{(\log T)^{k+1}}{k+1}+\gamma_k+O_k\left(\frac{(\log T)^k}{T}\right).$$
The constant $\gamma_k$ is called $k$-th Stieltjes constant.
\end{lemma}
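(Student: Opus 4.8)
The plan is to compare the sum with the integral $\int_1^T \frac{(\log x)^k}{x}\,dx=\frac{(\log T)^{k+1}}{k+1}$ and to identify the additive constant with $\gamma_k$ through its representation as a limit. Recall the classical representation of the $k$-th Stieltjes constant,
$$\gamma_k=\lim_{U\to\infty}\left(\sum_{n\leq U}\frac{(\log n)^k}{n}-\frac{(\log U)^{k+1}}{k+1}\right),$$
which I would take as the starting point. Writing $f(x)=\frac{(\log x)^k}{x}$ and $S(T)=\sum_{n\leq T}f(n)$, and noting $\int_T^U f(x)\,dx=\frac{(\log U)^{k+1}}{k+1}-\frac{(\log T)^{k+1}}{k+1}$, this representation gives
$$\gamma_k-\left(S(T)-\frac{(\log T)^{k+1}}{k+1}\right)=\lim_{U\to\infty}\left(\sum_{T<n\leq U}f(n)-\int_T^U f(x)\,dx\right),$$
so the whole problem reduces to bounding the difference between a tail sum and the corresponding tail integral.

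First I would estimate each unit block: for every integer $n$ and $x\in[n,n+1]$ the mean value theorem gives $|f(n)-f(x)|\leq \max_{[n,n+1]}|f'|$, whence
$$\left|f(n)-\int_n^{n+1}f(x)\,dx\right|\leq \max_{u\in[n,n+1]}|f'(u)|.$$
A direct computation yields $f'(x)=\frac{(\log x)^{k-1}(k-\log x)}{x^2}$, so $|f'(x)|\ll_k \frac{(\log x)^k}{x^2}$ for $x\geq 2$. Summing over $n>T$ and comparing once more with an integral,
$$\sum_{n>T}\left|f(n)-\int_n^{n+1}f(x)\,dx\right|\ll_k \sum_{n>T}\frac{(\log n)^k}{n^2}\ll_k \int_T^\infty \frac{(\log x)^k}{x^2}\,dx.$$

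It then remains to evaluate the tail integral, which by repeated integration by parts (or induction on $k$, integrating $x^{-2}$ and differentiating $(\log x)^k$) satisfies $\int_T^\infty \frac{(\log x)^k}{x^2}\,dx\ll_k \frac{(\log T)^k}{T}$; this is exactly the claimed error term. Combining the three displays proves the lemma for $T\geq T_0(k)$, say $T\geq e^k$, beyond which $f$ is monotone decreasing and the comparison is cleanest; for the bounded range $2\leq T\leq T_0(k)$ the estimate holds trivially after enlarging the implied constant, since there both $S(T)-\frac{(\log T)^{k+1}}{k+1}-\gamma_k$ and $\frac{(\log T)^k}{T}$ are bounded above and below by positive constants depending only on $k$. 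The only point requiring any care is this uniformity in the range of $T$ relative to $k$, together with the sign change of $f'$ near $x=e^k$; since $k$ is fixed and the implied constants are permitted to depend on $k$, neither causes a genuine difficulty, and the substance of the argument is entirely the elementary sum-versus-integral comparison above.
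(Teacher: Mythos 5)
Your proposal is correct. It is the same elementary sum-versus-integral idea as the paper, but executed differently: the paper writes $\sum_{n\leq T}\frac{(\log x)^k}{x}$ as a Riemann--Stieltjes integral against $d[x]=dx-d\{x\}$, integrates the $d\{x\}$ part by parts to produce a convergent integral (whose value supplies the constant, which is then named $\gamma_k$) plus a tail $J_T$, and bounds $J_T$ by cutting $[T,\infty)$ into the geometric blocks $[3^jT,3^{j+1}T)$. You instead take the limit representation of $\gamma_k$ as known, reduce everything to the tail difference $\sum_{T<n\leq U}f(n)-\int_T^U f$, and control each unit block by the mean value theorem via $|f'(x)|\ll_k (\log x)^k/x^2$, finishing with $\int_T^\infty (\log x)^k x^{-2}\,dx\ll_k (\log T)^k/T$. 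Both are first-order Euler--Maclaurin arguments giving the identical error term; yours has the small advantage of pinning the constant to the standard Stieltjes constant from the outset (and in fact your block estimate also proves that the defining limit exists, so nothing circular is being assumed), while the paper's version generates the constant internally as the value of a convergent integral. The only points you should make explicit are the boundary blocks at the non-integer endpoints $T$ and $U$: the one at $T$ contributes $O(f(T))=O_k((\log T)^k/T)$, which sits inside the claimed error, and the one at $U$ vanishes in the limit. Your handling of the range $2\leq T\leq T_0(k)$ by enlarging the implied constant is fine since $(\log T)^k/T$ is bounded below there by a constant depending only on $k$.
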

\begin{proof} Let $[x]$ and $\{x\}$ denote the integer and fractional part of $x$ respectively. Then we can write the following sum as a Riemann-Stieltjes integral:
$$\sum_{n\leq T}\frac{(\log n)^k}{n}=\int_1^T\frac{ (\log x)^k}{x}dx-\int_{1^-}^T\frac{ (\log x)^k}{x}d\{x\},$$
where we used that $d[x]=dx-d\{x\}$. The first integral above gives the main term $\frac{(\log T)^{k+1}}{k+1}$. In the second integral we use integration by parts:
\begin{align*}
-\int_{1^-}^T \frac{(\log x)^k}{x}d\{x\}&=\int_1^T\frac{(\log x)^k-k(\log x)^{k-1}}{x^2}dx+O\left(\frac{(\log T)^k}{T}\right)\\
&=c-\int_T^\infty\frac{ (\log x)^k-k(\log x)^{k-1}}{x^2}dx+O\left(\frac{(\log T)^k}{T}\right)\\
&:=c-J_T+O\left(\frac{(\log T)^k}{T}\right),
\end{align*}
where $c$ is a constant due to the fact that the function inside the integral $J_T$ is integrable in the interval $[1,\infty)$.
To estimate $J_T$, we divide the interval $[T,\infty)$ in subintervals $[T,3T)\cup [3T,3^2T)\cup...$. In each subinterval we have that 
$$\int_{3^jT}^{3^{j+1}T}\frac{ (\log x)^k-k(\log x)^{k-1}}{x^2}dx\ll \frac{j(\log T)^k}{3^jT}.$$
Hence, by summing the above estimate in $j$, we obtain the target result.
\end{proof}

\begin{lemma}\label{lema desigualdade holder discreta} Let $x_1,...,x_R$ be real numbers. Then, for any $k\geq 1$,
$$|(x_1+...+x_R)^k|\leq R^{k-1}(|x_1|^k+...+|x_R|^k).$$
\end{lemma}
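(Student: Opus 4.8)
The plan is to recognize this as a standard manifestation of the convexity of the map $t\mapsto |t|^k$ on $\RR$, which holds precisely because $k\geq 1$. The first step I would take is to dispose of the signs: by the triangle inequality $|x_1+\dots+x_R|\leq |x_1|+\dots+|x_R|$, and since $t\mapsto t^k$ is increasing on $[0,\infty)$, it suffices to prove the inequality for the non-negative numbers $y_i:=|x_i|$, namely $(y_1+\dots+y_R)^k\leq R^{k-1}(y_1^k+\dots+y_R^k)$. Thus the whole problem reduces to a single convexity estimate for non-negative reals.

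To establish that reduced inequality, I would invoke Jensen's inequality for the convex function $\phi(t)=t^k$ on $[0,\infty)$ with the uniform weights $1/R$, giving
\begin{align*}
\left(\frac{y_1+\dots+y_R}{R}\right)^k=\phi\!\left(\frac{1}{R}\sum_{i=1}^R y_i\right)\leq \frac{1}{R}\sum_{i=1}^R\phi(y_i)=\frac{1}{R}\sum_{i=1}^R y_i^k.
\end{align*}
Multiplying through by $R^k$ yields $(y_1+\dots+y_R)^k\leq R^{k-1}\sum_{i=1}^R y_i^k$, which combined with the sign reduction above is exactly the claimed bound.

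Alternatively, and perhaps more transparently, the same conclusion follows from Hölder's inequality applied to the vectors $(1,\dots,1)$ and $(y_1,\dots,y_R)$ with conjugate exponents $k$ and $k/(k-1)$:
\begin{align*}
\sum_{i=1}^R y_i\leq \left(\sum_{i=1}^R 1\right)^{(k-1)/k}\left(\sum_{i=1}^R y_i^k\right)^{1/k}=R^{(k-1)/k}\left(\sum_{i=1}^R y_i^k\right)^{1/k},
\end{align*}
and raising both sides to the $k$-th power gives the result at once. Either route is short, so I would pick whichever keeps the surrounding exposition lightest.

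There is no serious obstacle here; the lemma is elementary and it is really just the power-mean inequality in disguise. The only points that warrant a word of care are the boundary case $k=1$, for which $R^{k-1}=1$ and the statement degenerates to the triangle inequality, and the interpretation of $(x_1+\dots+x_R)^k$ when $k$ is not an integer, which is harmless since the outer absolute value means we are in effect bounding $|x_1+\dots+x_R|^k$.
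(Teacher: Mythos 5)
Your proposal is correct, and your second (H\"older) route with exponents $k$ and $k/(k-1)$ applied to $(1,\dots,1)$ and $(|x_1|,\dots,|x_R|)$ is exactly the paper's one-line proof; the Jensen variant you give first is an equivalent repackaging of the same convexity fact. No discrepancies worth noting.
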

\begin{proof} The proof is a direct application of H\"older's inequality with conjugated exponents $p^{-1}+q^{-1}=1$, with $p=k$:
$$|x_1+...+x_R|\leq R^{1-1/k}(|x_1|^k+...+|x_R|^k)^{1/k}.$$
\end{proof}

\section{Proof of Theorem \ref{teorema esperanca}}\label{secao prova teorema 1}
We begin the proof by applying the formula of Edelman and Kostlan \eqref{equacao formula zeros edelman and kostlan} in our context.
\begin{lemma}\label{lemma formula zeros nosso caso}
    Let $(X_n)_n$ be i.i.d. Gaussian random variables with mean $0$ and variance $1$ and $T>0$ be fixed. Then the expected number of real zeros of
        \begin{align*}
            S_T(t)=\sum_{n\leq T}\frac{X_n \cos(t \log n)}{\sqrt{n}},
        \end{align*}
    on an arbitrary interval $I$, is
        \begin{align*}
            \frac{1}{\pi} \int_{I}\bigg(\frac{\sum_{n\leq T}\frac{(\log n)^2}{2n}-\sum_{n\leq T}\frac{\cos(2t \log n)(\log n)^2}{2n}}{\sum_{n\leq T}\frac{1}{2n}+\sum_{n\leq T}\frac{\cos(2t \log n)}{2n}}-\bigg(\frac{\sum_{n\leq T}-\frac{\sin(2t \log n)(\log n)}{2n}}{\sum_{n\leq T}\frac{1}{2n}+\sum_{n\leq T}\frac{\cos(2t \log n)}{2n}}\bigg)^2\bigg)^{\frac{1}{2}} \,dt.
        \end{align*}
\end{lemma}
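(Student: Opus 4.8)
The plan is to apply the Edelman--Kostlan formula \eqref{equacao formula zeros edelman and kostlan} directly to the coordinate functions $v_n(t)=f_n(t)=\frac{\cos(t\log n)}{\sqrt n}$, $n\le T$. Writing the dot product that appears in that formula as a function of two independent variables,
$$G(x,y):=v(x)\cdot v(y)=\sum_{n\le T}\frac{\cos(x\log n)\cos(y\log n)}{n},$$
the integrand of \eqref{equacao formula zeros edelman and kostlan} is $\sqrt{\partial_x\partial_y\log G(x,y)\big|_{x=y=t}}$, so the first step is to carry out this mixed second derivative of the logarithm. By the chain rule,
$$\frac{\partial^2}{\partial x\,\partial y}\log G=\frac{G_{xy}}{G}-\frac{G_xG_y}{G^2},$$
and hence it suffices to compute the three partial derivatives $G_x$, $G_y$, $G_{xy}$ of the explicit finite sum $G$ and then set $x=y=t$.

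Differentiating term by term, which is legitimate since $G$ is a finite sum of smooth functions, I would obtain
$$G_x=-\sum_{n\le T}\frac{(\log n)\sin(x\log n)\cos(y\log n)}{n},\qquad G_{xy}=\sum_{n\le T}\frac{(\log n)^2\sin(x\log n)\sin(y\log n)}{n},$$
together with the symmetric expression for $G_y$. Evaluating at $x=y=t$ and applying the double-angle identities $\cos^2\alpha=\tfrac12(1+\cos 2\alpha)$, $\sin^2\alpha=\tfrac12(1-\cos 2\alpha)$ and $\sin\alpha\cos\alpha=\tfrac12\sin 2\alpha$ converts each sum into exactly the form required by the statement: $G(t,t)=\sum_{n\le T}\frac{1}{2n}+\sum_{n\le T}\frac{\cos(2t\log n)}{2n}$ is the common denominator, $G_{xy}(t,t)=\sum_{n\le T}\frac{(\log n)^2}{2n}-\sum_{n\le T}\frac{(\log n)^2\cos(2t\log n)}{2n}$ is the numerator of the first fraction, and $G_x(t,t)=G_y(t,t)=-\sum_{n\le T}\frac{(\log n)\sin(2t\log n)}{2n}$.

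The only point that must be observed rather than computed is that $G_x$ and $G_y$ coincide on the diagonal $x=y=t$, by the symmetry of $G$ in its two arguments; this is what turns the term $G_xG_y/G^2$ into the perfect square $\bigl(G_x(t,t)/G(t,t)\bigr)^2$ appearing in the stated integrand. Substituting the three evaluated sums into $\frac{G_{xy}}{G}-\frac{G_xG_y}{G^2}$ and inserting the result under the square root in \eqref{equacao formula zeros edelman and kostlan} then yields the claimed formula on the interval $I$. There is no genuine analytic obstacle here: the argument is a direct, if slightly lengthy, differentiation of a finite trigonometric Dirichlet polynomial, and the main care required is in bookkeeping the signs and the factors of $\tfrac12$ produced by the double-angle reductions.
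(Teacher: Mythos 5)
Your proposal is correct and follows essentially the same route as the paper: apply the Edelman--Kostlan formula to $v(t)=(f_1(t),\dots,f_{[T]}(t))$, expand $\partial_x\partial_y\log(v(x)\cdot v(y))$ as $G_{xy}/G-G_xG_y/G^2$, set $x=y=t$, and reduce via the double-angle identities. The computations of $G$, $G_x$, $G_{xy}$ and their diagonal values all match the paper's.
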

\begin{proof}
    We let $v(t)=(f_1(t),\dots, f_{[T]}(t))$ where $f_n(t)=\frac{\cos(t\log n)}{\sqrt{n}}$ and apply the result by Edelman and Kostlan \eqref{equacao formula zeros edelman and kostlan}.

    We have,
        \begin{align}\label{equacao edelman kostlan}
            v(x)\cdot v(y)=\sum_{n\leq T}f_n(x)f_n(y)=\sum_{n\leq T}\frac{\cos(x \log n)\cos(y \log n)}{n}.
        \end{align}

    Now we compute the logarithmic derivative with respect to $y$ to obtain
        \begin{align*}
            \frac{\partial}{\partial y}\log\bigg(\sum_{n\leq T}f_n(x)f_n(y)\bigg)&=\frac{\sum_{n\leq T}f_n(x)f_n'(y)}{\sum_{n\leq T}f_n(x)f_n(y)}.
        \end{align*}
        
    Computing the derivative with respect to $x$, we get the following
        \begin{align}\label{integrando edelman kostlan}
            \frac{\partial^2}{\partial x\partial y}\log(v(x)\cdot v(y))&=\frac{\partial}{\partial x}\bigg(\frac{\sum_{n\leq T}f_n(x)f_n'(y)}{\sum_{n\leq T}f_n(x)f_n(y)}\bigg) \nonumber\\
            &=\frac{\sum_{n\leq T}f_n'(x)f_n'(y)}{\sum_{n\leq T}f_n(x)f_n(y)}-\frac{\sum_{n\leq T}f_n(x)f_n'(y)\sum_{n\leq T}f_n'(x)f_n(y)}{(\sum_{n\leq T}f_n(x)f_n(y))^2}.
        \end{align}

    Finally we set $x=y=t$ and use trigonometric identities to rewrite the above result. Since the derivative of our function $f_n$ is $f_n'(t)=\frac{-\sin(t\log n)(\log n)}{\sqrt{n}}$, by \eqref{equacao edelman kostlan} the first term can be written as
        \begin{align*}
            \frac{\sum_{n\leq T}f_n'(t)f_n'(t)}{\sum_{n\leq T}f_n(t)f_n(t)}&=\frac{\sum_{n\leq T}\frac{\sin^2(t \log n)(\log n)^2}{n}}{\sum_{n\leq T}\frac{\cos^2(t \log n)}{n}}=\frac{\sum_{n\leq T}\frac{(\log n)^2}{2n}-\sum_{n\leq T}\frac{\cos(2t \log n)(\log n)^2}{2n}}{\sum_{n\leq T}\frac{1}{2n}+\sum_{n\leq T}\frac{\cos(2t \log n)}{2n}},
        \end{align*}
    where in the second equality we used that $\sin^2(\theta)=\frac{1-\cos(2\theta)}{2}$ and $\cos^2(\theta)=\frac{1+\cos(2\theta)}{2}$.

    For the second term of \eqref{integrando edelman kostlan}, we have
        \begin{align*}
            \frac{\sum_{n\leq T}f_n(t)f_n'(t)\sum_{n\leq T}f_n'(t)f_n(t)}{(\sum_{n\leq T}f_n(t)f_n(t))^2}&=\bigg(\frac{\sum_{n\leq T}-\frac{\sin(t\log n)\cos(t\log n)(\log n)}{n}}{\sum_{n\leq T}\frac{\cos^2(t\log n)}{n}}\bigg)^2\\
            &=\bigg(\frac{\sum_{n\leq T}-\frac{\sin(2t \log n)(\log n)}{2n}}{\sum_{n\leq T}\frac{1}{2n}+\sum_{n\leq T}\frac{\cos(2t \log n)}{2n}} \bigg)^2,
        \end{align*}
    where here we used that $2\sin(\theta)\cos(\theta)=\sin(2\theta)$. Putting these two results together and applying the Edelman and Kostlan formula we obtain our Lemma.
\end{proof}

Now we continue with

\begin{proof}[Proof of Theorem \ref{teorema esperanca}] Define 
$$u_T(t):=\sum_{n\leq T}\frac{\cos(t\log n)}{n}.$$

By Lemmas \ref{lema stieltjes constants} and \ref{lemma formula zeros nosso caso}, we have that
\begin{align*}
\EE N(T)&=\frac{1}{\pi}\int_{T}^{2T}\left(\frac{\frac{(\log T)^3}{3}+\gamma_2+O\left(\frac{(\log T)^2}{T}\right)+u_T''(2t)}{\log T+\gamma+O\left(\frac{1}{T}\right)+u_T(2t)}-(1+o(1))\frac{(u_T'(2t))^2}{(\log T)^2}\right)^{1/2}dt\\
&:=\frac{1}{\pi}\int_{T}^{2T}\sqrt{A/B-C^2}dt.
\end{align*}

Now we will rewrite 
$$A/B=\frac{\frac{(\log T)^3}{3}+\gamma_2+O\left(\frac{(\log T)^2}{T}\right)+u_T''(2t)}{\log T+\gamma+O\left(\frac{1}{T}\right)+u_T(2t)}$$ 
in a more tractable manner. We begin by defining
\begin{align*}
x&:=\frac{\gamma}{\log T}+O\left(\frac{1}{T\log T}\right)+\frac{u_T(2t)}{\log T},\\
y&:=\frac{3\gamma_2}{(\log T)^3}+O\left(\frac{1}{T\log T}\right)+\frac{3u_T''(2t)}{(\log T)^3},
\end{align*}
so that
$$A/B=\frac{(\log T)^2}{3}\frac{1+y}{1+x}.$$

By estimate \eqref{equacao estimativa zeta}, Lemma \ref{lemma derivadas da zeta} and Lemma \ref{lemma daodao}, for $j=0,1,2$ we have that in the range $t\in[T,2T]$,
\begin{equation}\label{equacao estimativas u''e u'}
|u_T^{(j)}(2t)| \ll |\zeta^{(j)}(1+2it)|\ll (\log T)^{2(j+1)/3}.
\end{equation}

Therefore $x=O((\log T)^{-1/3})$, and hence, by Lemma \ref{lemma serie de taylor}
$$A/B=\frac{(\log T)^2}{3}\frac{1+y}{1+x}=\frac{(\log T)^2}{3}(1-x+y-xy+O(x^2+|y|x^2)).$$
Now we define $z=3C^2/(\log T)^2$ and write
$$A/B-C^2=\frac{(\log T)^2}{3}(1-x+y-xy-z+O(x^2+|y|x^2)).$$

By \eqref{equacao estimativas u''e u'},
$$y\ll (\log T)^{-1},\;z\ll (\log T)^{-4/3}.$$

Therefore, $w:=-x+y-xy-z+O(x^2+|y|x^2)$ is $o(1)$, and hence, by Lemma \ref{lemma serie de taylor}
$$\sqrt{A/B-C^2}=\frac{\log T}{\sqrt{3}}(1+w/2+O(w^2)).$$

Now we can write
$$\EE N(T)=\frac{1}{\pi \sqrt{3}}T\log T+\frac{1}{2\pi \sqrt{3}}\log T\int_T^{2T}wdt+O\left(\log T\int_T^{2T}w^2dt\right).$$

Now we will break the integral of $w$ in several pieces.

\noindent Step 1: $-\int_{T}^{2T}xdt=-\frac{\gamma T}{\log T}+O\left(\frac{\log\log T}{\log T}\right)$.

We recall that
$$x=\frac{\gamma}{\log T}+O\left(\frac{1}{T\log T}\right)+\frac{u_T(2t)}{\log T}.$$
Observe that
\begin{align*}
\int_T^{2T}u_T(2t)dt&=\sum_{n\leq T}\frac{1}{2n\log n}(\sin(4T\log n)-\sin(2T\log N))\\
&\ll\sum_{n\leq T}\frac{1}{n\log n}\\
&\ll \log\log T.
\end{align*}
This justifies step 1.

\noindent Step 2: $\int_{T}^{2T}ydt\ll \frac{T}{(\log T)^3}$.

We recall that
$$y=\frac{3\gamma_2}{(\log T)^3}+O\left(\frac{1}{T\log T}\right)+\frac{3u_T''(2t)}{(\log T)^3}.$$
Now, as discussed earlier, by \eqref{equacao estimativas u''e u'} we have that,
$$\int_{T}^{2T}u_T''(2t)dt\ll |u_T'(2T)|+|u_T'(T)|\ll (\log T)^{4/3}.$$
This justifies step 2.

\noindent Step 3: $\int_{T}^{2T}xydt\ll \frac{T}{(\log T)^4}$.

We have that
\begin{align*}
xy &=\left(O\left(\frac{1}{\log T}\right)+\frac{u_T(2t)}{\log T}\right)\left(O\left(\frac{1}{(\log T)^3}\right)-\frac{3u_T''(2t)}{(\log T)^3}\right)\\
& =\frac{1}{(\log T)^4}O(1+|u_T(2t)|+|u_T''(2t)|+|u_T(2t)||u_T''(2t)|).
\end{align*}
Now, by the Cauchy-Schwarz inequality and Lemma \ref{lemma L^2 average dirichlet polynomials}, we have that
$$\int_{T}^{2T}|u_T(2t)|dt\leq \sqrt{T}\left(\int_{0}^{2T}|u_T(2t)|^2\right)^{1/2}\ll T,$$
and similarly
\begin{align*}
&\int_{T}^{2T}|u_T''(2t)|dt\ll T,\\
&\int_{T}^{2T}|u_T(2t)||u_T''(2t)|dt\ll T.
\end{align*}
This justifies step 3.

Now, similarly to step 3, we obtain\\
\noindent Step 4: $\int_{T}^{2T}zdt\ll \frac{T}{(\log T)^4}$.

\noindent Step 5: $\int_{T}^{2T}x^2dx\ll \frac{T}{(\log T)^2}$.

By Lemma \ref{lema desigualdade holder discreta} we have that 
$$x^2=\frac{1}{(\log T)^2}O(1+u_T(t)^2),$$
and hence, similarly to Step 3 we obtain Step 5.

\noindent Step 6: $\int_{T}^{2T}|y|x^2dt\ll \frac{T}{(\log T)^{5-4/3}}$.

We have that
\begin{align*}
|yx^2|&=\frac{1}{(\log T)^5}O((1+|u_T''(2t)|)(1+|u_T(2t)|^2))\\
&=\frac{1}{(\log T)^5}O((1+|u_T''(2t)|+|u_T(2t)|^2+|u_T''(2t)|\cdot|u_T(2t)|^2)).
\end{align*}

Thus, the new kind of integral that we have to deal with it is of $|u_T''(2t)|\cdot|u_T(2t)|^2$, the others can be treated similarly to step 3. To proceed with it, we use the pointwise bound
$$|u_T''(2t)|\cdot|u_T(2t)|^2\ll (\log T)^{4/3}|u_T''(2t)|.$$
Therefore
$$\int_T^{2T}|u_T''(2t)|\cdot|u_T(2t)|^2dt\ll T(\log T)^{4/3},$$
and this justifies step 6.

We conclude that, by collecting all of these steps, we have that
$$\EE N(T)=\frac{1}{\pi\sqrt{3}} T \log T -\frac{\gamma}{2\pi\sqrt{3}}T+O\left(\frac{T}{\log T}\right)+O\left(\log T\int_T^{2T}w^2dt\right).$$

Thus, it remains to estimate $\int_T^{2T}w^2dt$. By Lemma \ref{lema desigualdade holder discreta} again, we obtain
$$w^2\ll x^2+y^2+x^2y^2+z^2+x^4+y^2x^4.$$
The integral $\int_{T}^{2T}(x^2+y^2+z^2)dt$ can be bounded as in the same way of the steps above. It is possible to show that this integral is $O(T/(\log T)^2)$.

\noindent Step 7: $\int_{T}^{2T}x^4dt\ll \frac{T}{(\log T)^{4-4/3}}$.

By Lemma \ref{lema desigualdade holder discreta} applied with $R=2$ and $k=4$:
$$x^4=\frac{1}{(\log T)^4}O(1+u_T(2t)^4).$$
Now $u_T(2t)^4\ll (\log T)^{4/3}u_T(2t)^2$. Therefore
$$\int_{T}^{2T}u_T(2t)^4dt\ll T(\log T)^{4/3},$$
and this justifies step 4.

\noindent Step 8: $\int_{T}^{2T} x^2y^2dt\ll \frac{T}{(\log T)^4}$.

Similarly to step 7, we can show that
$$\int_{T}^{2T}y^4dt\ll \frac{T}{(\log T)^8}.$$
This combined with Cauchy-Schwarz inequality gives that
$$\int_{T}^{2T} x^2y^2dt\leq\left(\int_{T}^{2T}x^4dt\right)^{1/2}\left(\int_{T}^{2T}y^4dt\right)^{1/2},$$
and this justifies step 8.

\noindent Step 9 (Final): $\int_{T}^{2T} y^2x^4dt\ll \frac{T}{(\log T)^{6-4/3}}$.

Here we use the pointwise bound
$$y^2x^4\ll \frac{x^2y^2}{(\log T)^{2-4/3}},$$
and conclude with the previous step.

Collecting all of these last steps, we conclude that $\log T\int_{T}^{2T}w^2dt\ll \frac{T}{(\log T)^{3-4/3}}$, 
and this completes the proof. \end{proof}

\section{Proof of Theorem \ref{teorema zeros derivada}}\label{secao prova teorema 2}
Now we present the main modifications in the proof of Theorem \ref{teorema esperanca} in order to obtain Theorem \ref{teorema zeros derivada}.

We begin by observing that either
$$S_T^{(k)}(t)=\pm \sum_{n\leq T}\frac{X_n(\log n)^k\cos(t\log n)}{\sqrt{n}},$$
or
$$S_T^{(k)}(t)=\pm \sum_{n\leq T}\frac{X_n(\log n)^k\sin(t\log n)}{\sqrt{n}}.$$

The approach with $\sin$ in place of $\cos$ is the same, so we assume that it holds the first option above. In the same way as in Lemma \ref{lemma formula zeros nosso caso}, we can obtain that
\begin{align*}
            \frac{1}{\pi} \int_{I}\bigg(\frac{\sum_{n\leq T}\frac{(\log n)^{2k+2}}{2n}+\frac{u_T''(2t)}{2}}{\sum_{n\leq T}\frac{(\log n)^{2k}}{2n}+\frac{u_T(2t)}{2}}-\bigg(\frac{\frac{u_T'(2t)}{2}}{\sum_{n\leq T}\frac{(\log n)^{2k}}{2n}+\frac{u_T(2t)}{2}}\bigg)^2\bigg)^{\frac{1}{2}} \,dt,
        \end{align*}
where
$$u_T(t)=\sum_{n\leq T}\frac{(\log n)^{2k}\cos(t\log n)}{n}.$$
Repeating the same procedure and defining the function inside the squareroot above by $A/B-C^2$, by Lemma \ref{lema stieltjes constants}, we have that
$$A/B-C^2=\frac{2k+1}{2k+3}(\log T)^2\left(\frac{1+y}{1+x}-z\right),$$
where
\begin{align*}
x&:=\frac{(2k+1)\gamma_{2k}}{(\log T)^{2k+1}}+O\left(\frac{1}{T\log T}\right)+\frac{(2k+1)u_T(2t)}{(\log T)^{2k+1}},\\
y&:=\frac{(2k+3)\gamma_2}{(\log T)^{2k+3}}+O\left(\frac{1}{T\log T}\right)+\frac{(2k+3)u_T''(2t)}{(\log T)^3},\\
z&:=\frac{(2k+1)(2k+3)u_T'(2t)^2}{(1+o(1))(\log T)^{4k+4}}.
\end{align*}

Now, by Lemmas \ref{lemma derivadas da zeta} and \ref{lemma daodao}, we have that
\begin{align*}
x&\ll(\log T)^{-(2k+1)/3},\\
y&\ll(\log T)^{-(2k+3)/3},\\
z&\ll(\log T)^{-4(k+1)/3},
\end{align*}
and these  bounds are stronger than as in the case $k=0$ of the proof of Theorem \ref{teorema esperanca}.

Now, by defining $w$ in the exactly same way as before $w:=-x+y-xy-z+O(x^2+|y|x^2)$, we have that
$$\EE N^{(k)}(T)=\frac{1}{\pi}\sqrt{\frac{2k+1}{2k+3}}T\log T+\frac{1}{2\pi}\sqrt{\frac{2k+1}{2k+3}}\log T\int_T^{2T}wdt+O\left(\log T\int_T^{2T}w^2dt\right).$$

The main contribution of $\int_{T}^{2T}wdt$ comes from $\int_{T}^{2T}xdt$ which gives the second main term, and by proceeding in the same manner as in steps 1-9 of the proof of Theorem \ref{teorema esperanca} we can complete the argument. 

\section{Concluding remarks}\label{secao remarks}
\subsection{Further discussion on the presented model} One interesting question that needs an answer is about the number of zeros of $\sum_{n\leq T}\frac{X_n}{n^{1/2+it}}$, not necessarily of the real or imaginary part isolated. Notice that in this case, for
$$\sum_{n\leq T}\frac{X_n}{n^{1/2+it}}=0$$
one needs to vanish the real and the imaginary part of this sum simultaneously. We did not find in the literature how to handle the case in which the real and the imaginary part have a non-trivial correlation. Indeed, the covariance is given by
$$\EE \re \sum_{n\leq T}\frac{X_n}{n^{1/2+it}} \im \sum_{m\leq T}\frac{X_m}{m^{1/2+it}}=-\sum_{n\leq T}\frac{\cos(t\log n)\sin(t\log n)}{n}=-\frac{1}{2}\sum_{n\leq T}\frac{\sin(2t\log n)}{n},$$
and the last sum can be seen as an approximation to $\frac{1}{2}\im \zeta(1-2it)$. It remains an open question to compute the expected number of zeros in this case.

In contrast, if $(X_n)_n$ are complex Gaussian random variables, in the sense that the real and the imaginary part of $X_n$ are independent and have Gaussian distribution with mean $0$ and variance $1$, then a formula for the number of zeros of
$$P_T(\sigma+it):=\sum_{n\leq T}\frac{X_n}{n^{\sigma+it}}$$
is known, see for example Theorem 8.1 of \cite{edelman_roots_random_poly}. In this case, to see something relevant on the expected counting of the number of zeros, we should look for zeros on sets of positive two-dimensional Lebesgue measure of $\mathbb{C}$. Therefore, the expected number of zeros over a one-dimensional set (like a line) of $P_T(\sigma+it)$ is zero.

\subsection{Other related models} Another interesting model that one could try to obtain information on the expected number of zeros is the one with a random multiplicative function $f(n)$ in place of $X_n$. A Steinhaus random multiplicative function for example, is obtained by letting over primes $(f(p))_p$ be sequence of i.i.d. random variables with uniform distribution over the complex unitary circle, and at the positive integer $n$ factorized as $n=p_1^{a_1}\cdot...\cdot p_k^{a_k}$,
$$f(n)=f(p_1)^{a_1}\cdot...\cdot f(p_k)^{a_k}.$$
In a certain sense, the Steinhaus random multiplicative function $f$ can be seen as model for $n\mapsto n^{it}$, and this approach was used before by Heap, Zhao and the first author in \cite{aymone_JLMS} in order do model the maximal value of $\zeta(1/2+it)$.

\subsection{An ending remark} We conclude this paper with a brief remark. By the methods presented here, we can prove results for the expected number of zeros $\EE N(T;\sigma)$ of
$$S_T(t;\sigma):=\sum_{n\leq T}\frac{X_n\cos(t\log n)}{n^\sigma}$$
in the interval $[T,2T]$.

In the case $\sigma>1/2$, one can show that $\EE N(T;\sigma)\ll T$, and for $0\leq \sigma \leq 1/2$, $\EE N(T;\sigma)\sim c_\sigma T\log T$, for some constant $c_\sigma>0$, and thus indicating a transition of this number of zeros. 

\noindent \textbf{Acknowledgements}. We are warmly thankful to Micah Millinovich for enlightening discussions that led us to Lemma \ref{lemma derivadas da zeta}. We also thank the referee for his/her suggestion and corrections that improved our exposition. This project is supported by CNPq - grant Universal 403037/2021-2, and by FAPEMIG - grant Universal APQ-00256-23.  

\bibliographystyle{siam}
\bibliography{ay.bib}

{\small{\sc \noindent
Departamento de Matem\'atica, Universidade Federal de Minas Gerais (UFMG), Av. Ant\^onio Carlos, 6627, CEP 31270-901, Belo Horizonte, MG, Brazil.} \\
\textit{Email address:} \verb|aymone.marco@gmail.com, caiomafiabueno@gmail.com| }
\end{document}